\newtheorem{thm}{Theorem}[section]
\newtheorem{lem}[thm]{Lemma}
\newtheorem{clm}[thm]{Claim}
\theoremstyle{remark}
\newtheorem{rmk}[thm]{Remark}
\theoremstyle{definition}
\newtheorem{Def}[thm]{Definition}                                        %
\title{On the K\"ahler-Ricci flows near the Mukai-Umemura 3-fold.}
\author{Yuanqi Wang}
\begin{document}
\maketitle
\begin{abstract}
In this short note, we show that given a special K\"ahler-Einstein degeneration with bounded geometry, for any noncentral fiber, there exists a K\"ahler-Ricci flow which converges to the K\"ahler-Einstein metric of the central fiber. As an example, Tian's deformations \cite{Tian97} of the Mukai 3-fold admit K\"ahler-Ricci flows which converge to Donaldson's K\"ahler-Einstein metric \cite{Don08} over the  Mukai 3-fold.
\end{abstract}
\section{Introduction}This is a following up note of \cite{SW}. 
In 1982, R.Hamilton introduced the Ricci flow
in \cite{Hamilton}. 
 In the K\"ahler setting,  the
K\"ahler condition is preserved under the flow. H-D. Cao(\cite{Cao}) first studied the K\"ahler-Ricci flow.
Using Yau's estimates for complex Monge-Amp\`ere equations,  Cao
proved when $C_1(M)$ is negative or zero, the properly normalized K\"ahler-Ricci flow converges to a K\"ahler-Einstein metric, thus obtained another proof of the Calabi conjecture originally proved by Yau in \cite{Yau}. 

When $C_1(M)>0$, the situation is more complicated. A result announced by Perelman, and   proved by Tian-Zhu in \cite{TZ1} says that if there exists a K\"ahler-Einstein metric in $C_{1}(M)$ with respect to the underline complex structure, then the K\"ahler-Ricci flow
\begin{equation}\label{KRF flow equation }
\frac{\partial g}{\partial t}=-Ric+g
\end{equation}
initiated from any metric in $C_{1}(M)$ converges to the  K\"ahler-Einstein metric up to a biholomorphism. However, when there is no 
K\"ahler-Einstein metric with respect to the underlying complex structure, it's very interesting to study where does the flow go. It's widely believed that even the  limit complex structure (as $t\rightarrow\infty$ ) could be different from the complex structure of the initial metric of the flow.

 In this note, we show that there really exist such kind of  examples of flows with jumping complex structures as $t\rightarrow \infty$. These examples live on certain deformations of  the well known Mukai-Umemura 3-fold (see \cite{Mukai}, \cite{MukaiUmemura}). In the appendix we will say more about these 3-folds for the sake of a self-contained proof, but in this section  we  abbreviate everything to make it easiest to understand. Denote $M_{F_0}$ as the Mukai-Umemura 3-fold, and $M_{F_{v}}$ as the deformations (parametrized by tensors $v$) which are  considered by Tian in \cite{Tian97}. The amazing history is that: on one hand, Tian proved that $M_{F_{v}}$ does not admit any K\"ahler-Einstein metric; on the other hand, Donaldson proved in \cite{Don08} that $M_{F_0}$ admits a K\"ahler-Einstein metric. Moreover, Donaldson also  showed there is a 2-parameter family $M_{\alpha,C}$ of deformations (of $M_{F_0}$) which do admit K\"ahler-Einstein metrics. Though $M_{F_0}$ is not isolated K\"ahler-Einstein,   the following theorem holds. 
\begin{thm}\label{Main Theorem}For any $v$ close enough to $0$ (as a tensor), there exists a metric $\omega_v$ over Tian's deformations $M_{F_{v}}$ , such that the K\"ahler-Ricci flow initiated from
$\omega_v$ converges to Donaldson's K\"ahler-Einstein metric over the Mukai 3-fold $M_{F_0}$ (up to a biholomorphism). In particular, the limit complex structure (as $t\rightarrow \infty$) is different from the complex structure of the initial metric $\omega_v$.
\end{thm}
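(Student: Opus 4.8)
The plan is to realize $M_{F_v}$ as a noncentral fiber of a one-parameter degeneration whose central fiber is the Mukai--Umemura 3-fold $M_{F_0}$, and then apply the abstract "convergence from noncentral fibers" mechanism alluded to in the abstract (following \cite{SW}). Concretely, I would first set up the family $\pi\colon \mathcal{M}\to \Delta$ over a small disk with $\pi^{-1}(0)=M_{F_0}$ and $\pi^{-1}(s)\cong M_{F_v}$ for $s\ne 0$ and appropriate $v=v(s)$, using Tian's explicit deformations \cite{Tian97}; since all these are Fano $3$-folds with $C_1>0$ and the anticanonical polarization varies holomorphically, the family carries a relatively ample line bundle, and one gets a smooth family of Kähler classes. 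The key geometric input is that $M_{F_0}$ admits Donaldson's Kähler--Einstein metric $\omega_{KE}$ \cite{Don08}; I would then produce, for $s$ small, a Kähler metric $\omega_v$ on $M_{F_v}$ that is $C^\infty$-close to $\omega_{KE}$ under a diffeomorphism identifying the underlying smooth manifolds (all fibers being diffeomorphic to the smooth $3$-fold underlying $M_{F_0}$). This is where the hypothesis "special Kähler-Einstein degeneration with bounded geometry" from the abstract is used: it guarantees uniform curvature and injectivity-radius bounds along the family, so the nearby-fiber metrics are genuine small perturbations of $\omega_{KE}$.

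Next I would run the normalized Kähler--Ricci flow \eqref{KRF flow equation } starting from $\omega_v$ on $M_{F_v}$. The heart of the argument is a stability statement: although $M_{F_v}$ is not Kähler--Einstein (Tian), the flow on $M_{F_v}$ starting $C^\infty$-close to the pulled-back $\omega_{KE}$ exists for all time and converges, but the limit metric lives on a possibly different complex structure. The mechanism I expect is Perelman-type monotonicity combined with a Łojasiewicz--Simon inequality (or, equivalently, the lower bound of Perelman's $\mu$-functional / Ding functional near $\omega_{KE}$): one shows $\mu(\omega_v)$ is within $\epsilon$ of its value at $\omega_{KE}$, deduces the flow stays in a small neighborhood of the orbit of $\omega_{KE}$ under the diffeomorphism group, and then that the only way the flow can converge inside that neighborhood — using that $M_{F_v}$ itself admits no KE metric — is by "jumping" in the Gromov--Hausdorff / Cheeger--Gromov sense to $(M_{F_0},\omega_{KE})$. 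Here one uses that Donaldson's KE metric on $M_{F_0}$ is a genuine limit point and that the $K$-semistable degeneration forces the limit complex structure to be $M_{F_0}$; this is precisely the content of the earlier abstract result, which I would invoke rather than reprove.

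The final step is to conclude that the limit complex structure differs from that of $M_{F_v}$. This is immediate once convergence to $(M_{F_0},\omega_{KE})$ is established: $M_{F_v}$ is biholomorphic neither to $M_{F_0}$ for $v\ne 0$ (the deformation is nontrivial — this is exactly why Tian's examples are interesting, and it follows from Donaldson's analysis of the $SL(2,\mathbb{C})$-action on the deformation space \cite{Don08}), so the complex structure at $t=\infty$ genuinely jumps.

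I expect the main obstacle to be controlling the flow for \emph{all} time inside a neighborhood of the $\mathrm{Diff}$-orbit of $\omega_{KE}$, not merely for short time: one must rule out the flow wandering far away and either collapsing or developing singularities before it can sense the Kähler--Einstein metric on the limit complex structure. The standard way around this is a Łojasiewicz--Simon gradient inequality for the relevant functional near $\omega_{KE}$ (as in work of Sun--Wang, Tian--Zhu, or the first paper \cite{SW} this note follows up on), which converts "starting close" into "staying close and converging", but verifying its hypotheses requires knowing the infinitesimal deformation theory of $\omega_{KE}$ — in particular the structure of the (nontrivial, since $M_{F_0}$ is not isolated) space of holomorphic vector fields and KE deformations coming from Donaldson's family $M_{\alpha,C}$. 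Handling that cokernel direction carefully, and checking that Tian's deformation direction $v$ is transverse to it in the right sense, is the delicate point; everything else is the now-standard machinery of Kähler--Ricci flow convergence near a Kähler--Einstein metric.
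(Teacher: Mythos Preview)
Your overall architecture matches the paper: realize Tian's $M_{F_v}$ as a noncentral fiber of a degeneration to $M_{F_0}$, produce a K\"ahler metric on $M_{F_v}$ close to $\omega_{KE}$, apply the stability result of \cite{SW} to get convergence of the flow to \emph{some} K\"ahler--Einstein limit, and then identify that limit with $(M_{F_0},\omega_{KE})$. The paper executes the first step via the one-parameter group $\psi(s)$ generated by $H_1\in\eta(M_{F_0})$, which furnishes the vector field $\sigma$ of Definition~\ref{Definition of the degeneration}, and constructs the nearby metric $\omega_{\phi,u}$ by an explicit Hodge-theoretic perturbation (Lemma~\ref{perturbation of Kahler metric to different complex structure}).

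Where your sketch drifts is in the last two paragraphs. The worry that one must check Tian's direction $v$ is ``transverse to the cokernel'' of KE deformations is a red herring: Theorem~1.1 of \cite{SW} applies to \emph{every} metric sufficiently close to $\omega_{KE}$, with no hypothesis on which deformation direction it came from, and the \L ojasiewicz-type argument there already accommodates the non-isolatedness of $\omega_{KE}$. What that theorem does \emph{not} tell you is which K\"ahler--Einstein structure is the limit---a priori the flow could land on one of Donaldson's nearby $M_{\alpha,C}$ rather than on $M_{F_0}$. The paper closes this not by any transversality computation but by a uniqueness input: since $\widetilde J_u$ degenerates to a KE structure via the Ricci flow \emph{and} independently degenerates to $J_0$ via the explicit $\sigma$-action, Chen--Sun's uniqueness of K\"ahler--Einstein degenerations \cite{CS} together with Bando--Mabuchi \cite{Bando} forces the two limits to coincide up to biholomorphism. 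Your phrase ``$K$-semistable degeneration forces the limit'' gestures at this, but the precise lever is \cite{CS}; once you see that, the transversality concern dissolves and nothing delicate remains beyond verifying Definition~\ref{Definition of the degeneration} for the Mukai--Umemura family.
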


Actually it's well known that $M_{F_{v}}$ is a special example of the following  degenerations. 
\begin{Def}\label{Definition of the degeneration}We call the triple $(\mathfrak{M},\varpi,B)$ a special K\"ahler-Einstein degeneration with bounded geometry if it satisfies the following. 
\begin{enumerate}
\item $(\mathfrak{M},\varpi,B)$ is a differentiable family of complex manifolds in the sense of Definition 4.1 in  \cite{Kodaira} ($\varpi$ is a map of full rank from $\mathfrak{M}$ to $B$), $B$ is a ball in  $R^{m}$ and is centered at the origin. 
\item For any $u\in B$, $\varpi^{-1}(u)\triangleq M_{u}$ is a Fano K\"ahler manifold. There exists a smooth tensor $J\in \Gamma (T^{\star}\mathfrak{M}\otimes T\mathfrak{M})$, such that for any point $p\in \mathfrak{M}$, $T_{p}M_{\varpi(p)}$ is an invariant subspace of $J$, and $J$ restricts to  the complex structure of $M_{\varpi(p)}$. Moreover, there exists a smooth two-form $G$ over $\mathfrak{M}$ such that restricted to each fiber $(M_{u}, TM_{u})$, $G|_{(M_{u}, TM_{u})}=\omega_u$ is a K\"ahler metric form in $ C_{1}(M_u,J_u)$.
\item $M_{0}$ admits a K\"ahler-Einstein metric $\omega_{KE}$.
 \item There exists a  smooth vector field $\sigma$ over $\mathfrak{M}$, such that $\varpi_{\star}\sigma=-\Sigma_{i}u_{i}\frac{\partial}{\partial u_i}$. Furthermore, the diffeomorphism $\sigma_{a}$ generated by $\sigma$ is a biholomorphism from $M_{u}$ to $M_{e^{-a}u}$.
\end{enumerate}
\end{Def}
 With respect to Definition (\ref{Definition of the degeneration}), the  following more general theorem is true. 
\begin{thm}\label{thm convergence over degenerations}
Let $(\mathfrak{M},\varpi,B)$ be a  special K\"ahler-Einstein degeneration with bounded geometry. For any fiber $M_{u},\ u\in B,\ u\neq 0$, there exists a metric $\omega_{\phi,u}$, such that the K\"ahler-Ricci flow initiated from  $\omega_{\phi,u}$ converges in Cheeger-Gromov sense with polynomial rate to the K\"ahler-Einstein metric $\omega_{KE}$ of $M_0$, up to a biholomorphism. 
\end{thm}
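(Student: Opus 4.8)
The plan is to run the normalized K\"ahler-Ricci flow on the fixed complex manifold $M_u$, starting from a metric placed very close to $\omega_{KE}$, and to show that the flowed metrics, modulo a time-dependent family of diffeomorphisms, converge to $\omega_{KE}$ on $M_0$. First I would use item 4 of Definition \ref{Definition of the degeneration} to reduce to $\|u\|$ arbitrarily small: since $\sigma_a\colon M_u\to M_{e^{-a}u}$ is a biholomorphism, the flow on $M_u$ is conjugate to the flow on $M_{e^{-a}u}$, so it suffices to treat fibers as close to the central one as we please. Identifying the fibers over a small ball with a fixed smooth manifold $M$ carrying complex structures $J_u\to J_0$ and writing $\omega_u=G|_{M_u}\in C_{1}(M_u,J_u)$, the initial datum $\omega_{\phi,u}$ will be a K\"ahler metric on $M_u$ that is, modulo a near-identity diffeomorphism, $\epsilon$-close in $C^k$ to $(\omega_{KE},J_0)$. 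Such metrics exist because $M_u$ is biholomorphic to $M_{e^{-a}u}$, which is $C^k$-close to $M_0$ for $a$ large: transport $\omega_{KE}$ to $M_{e^{-a}u}$, correct it to a K\"ahler form for $J_{e^{-a}u}$ (possible since positivity of $C_1$ is an open condition), and pull back by $\sigma_a$; alternatively one may run the flow from $\omega_{e^{-a}u}$ for a bounded time after first using Perelman and Tian--Zhu \cite{TZ1} to bring $\omega_0$ on $M_0$ near $\omega_{KE}$.

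The heart of the matter is a stability analysis of the flow near $(\omega_{KE},J_0)$, in the spirit of \cite{SW}. Fixing a gauge — removing the diffeomorphism freedom by a DeTurck-type modification and projecting out the holomorphic vector fields — the flow becomes a strictly parabolic system for the pair (K\"ahler potential, complex structure), whose linearization at $(\omega_{KE},J_0)$ splits the relevant function space into a gauge part, a strictly stable part on which solutions decay exponentially, and a finite-dimensional neutral part. Complex-structure deformations contribute only to the neutral part, and by item 4 the deformation $J_u-J_0$ lies, to leading order, in the one-parameter family generated by $\sigma$. The K\"ahler-Ricci flow resolves this neutral direction by drifting: I expect the solution to be well approximated by $t\mapsto\sigma_{\rho(t)}^{\star}(\text{metric near }\omega_{KE})$ for some reparametrization $\rho(t)\to\infty$, the drift being driven by the first nonvanishing term of a {\L}ojasiewicz-type inequality for the relevant energy (Mabuchi's $K$-energy, the Ding functional, or Perelman's $\lambda$) — the very quantity detecting that $M_u$ carries no K\"ahler-Einstein metric while $M_0$ does. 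Since the neutral mode decays only polynomially, $\rho(t)$ grows like a power of $t$, yielding the polynomial Cheeger-Gromov rate.

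To make this rigorous I would run a continuity/bootstrap argument: assuming the flow stays $\epsilon$-close modulo gauge to $(\omega_{KE},J_0)$ on $[0,T]$, show it is in fact $\tfrac{\epsilon}{2}$-close there, by combining exponential decay in the stable directions, smallness of the neutral component (controlled by the initial distance, which is small for $a$ large), and higher-order a priori estimates that are uniform over the fibers by the bounded geometry hypothesis. This closes the window and produces a complete flow confined to the neighborhood; a subsequence converges in Cheeger-Gromov sense to a shrinking gradient K\"ahler-Ricci soliton on a limit complex manifold, and the closeness to $\omega_{KE}$, the uniqueness of the K\"ahler-Einstein metric on $M_0$, and the fact that the $\sigma$-orbit of $M_u$ can accumulate only at $M_0$ (intermediate fibers admitting no K\"ahler-Einstein metric, hence no soliton) force the limit to be $(M_0,\omega_{KE})$. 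The {\L}ojasiewicz estimate then upgrades subsequential convergence to genuine convergence with polynomial rate.

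The main obstacle is precisely the neutral-direction analysis together with the identification of the limit: one must show the flow genuinely drifts toward $M_0$ rather than stalling at another nearby complex structure (for instance a point of Donaldson's family $M_{\alpha,C}$) or degenerating more severely, and one must extract an effective rate for this drift. This is where all the structural hypotheses are used — the global vector field $\sigma$ realizing the degeneration, the bounded geometry for uniform parabolic estimates, and a {\L}ojasiewicz inequality for the energy distinguishing $M_0$ from $M_u$ — and the finite-dimensional reduction to the dynamics on the neutral modes, together with the verification that the destabilizing mode is exactly the $\sigma$-direction, is the technical core. A further subtlety: the convergence is modulo diffeomorphisms \emph{onto a different complex manifold}, not modulo automorphisms of $M_u$, so the gauge diffeomorphisms must be allowed to move the complex structure, and one tracks them via the flow of an approximation to $\sigma$.
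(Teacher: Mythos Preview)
Your outline is considerably more ambitious than the paper's, which is essentially a two-line argument once the initial metric $\omega_{\phi,u}$ has been constructed (the paper does this carefully in Lemma \ref{perturbation of Kahler metric to different complex structure}, whereas your ``correct it to a K\"ahler form'' is the same idea stated loosely). The paper simply applies Theorem~1.1 of \cite{SW} as a black box to obtain convergence of the flow, with polynomial rate, to \emph{some} K\"ahler--Einstein pair $(\widetilde{\omega}_{KE},\widetilde{J}_{KE})$; it does not redo the linearization, gauge-fixing, neutral-mode, or {\L}ojasiewicz analysis you describe. What you sketch in your second and third paragraphs is in effect a reproof of \cite{SW}, not a proof of the present theorem.

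The genuine gap is precisely where you flag it: identifying the limit. Your proposed mechanism --- that the $\sigma$-orbit of $M_u$ accumulates only at $M_0$, and that ``intermediate fibers admit no K\"ahler--Einstein metric, hence no soliton'' --- does not work at the stated level of generality. Definition~\ref{Definition of the degeneration} assumes nothing about non-central fibers, and more seriously, there is no a priori reason the flow limit should lie on the $\sigma$-orbit at all; it could in principle be one of Donaldson's $M_{\alpha,C}$, as you yourself note. The paper bypasses this entirely by a uniqueness argument you do not mention: the flow furnishes one degeneration of $\widetilde{J}_u$ to a K\"ahler--Einstein structure $\widetilde{J}_{KE}$, while the $\sigma$-action furnishes a second degeneration of the same $\widetilde{J}_u$ to $J_0$. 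Chen--Sun's uniqueness of K\"ahler--Einstein degenerations (Theorem~1.5 in \cite{CS}) then forces $\widetilde{J}_{KE}$ and $J_0$ to be biholomorphic, and Bando--Mabuchi \cite{Bando} identifies the metrics. This is the key step your outline is missing.
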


\begin{rmk} Actually Theorem \ref{thm convergence over degenerations} implies more than Theorem \ref{Main Theorem} on the behavior of K\"ahler-Ricci flows near $M_{F_0}$, starting from the 4th kind of orbits as in page 44 of \cite{Don08}. Since these are exactly parallel to Theorem \ref{Main Theorem},  for the sake of brevity we only consider Tian's deformations.  Though holomorphic degenerations are considered most often, Theorem \ref{thm convergence over degenerations} holds more generally  for smooth degenerations. This is consistent with the real setting of Theorem 1.1 in \cite{SW}. The bounded geometry assumption is satisfied  in  most of the existing examples of degenerations. Similar topic  is also discussed in Tian-Zhu's work \cite{TZ2}. 
\end{rmk}

\textbf{Acknowledgements}: The author is grateful to Xiuxiong Chen for continuous 
support and encouragement. The author would like to thank   Song Sun for inspiring conversations. The author also would like to thank Xianzhe Dai and Bing Wang  for related discussions.
\section{Proof of Theorem \ref{thm convergence over degenerations}.}
The proof of Theorem \ref{Main Theorem} and Theorem \ref{thm convergence over degenerations} are  by  simple observations based on the work of Sun and the author  \cite{SW}, Donaldson \cite{Don08}, Tian \cite{Tian97}, and Chen-Sun  \cite{CS}. The point is to construct metrics which satisfy the requirements of Theorem 1.1 in \cite{SW}. First we have the following lemma which is  well known and essentially due to Kodaira \cite{Kodaira}. 
\begin{lem}\label{From the complex setting to the real setting of J} Suppose $(\mathfrak{M},\varpi,B)$ is a special K\"ahler-Einstein degeneration with bounded geometry.   Then for any $u=(u_1,....,u_{m})\in B$,  there is a family of smooth diffeomorphisms $\psi_{u}$ from 
$M_0$ to $M_{u}$ with the following property. For any $k>0$ we have \begin{equation}\label{Js close to J}
  \lim_{u\rightarrow 0}|\psi_{u,\star}^{-1}\circ J_{u}\circ \psi_{u,\star}-J_0|_{C^{k},g_0,M_0}=0,
  \end{equation}
 \begin{equation}\label{gs close to g0}
\lim_{u\rightarrow 0}|\psi_{u}^{\star}g_{u}-g_0|_{C^{k},g_0,M_0}=0.
\end{equation}

 Moreover, there is a smooth family of smooth diffeomorphisms $\psi_{s, u}$ from $\mathfrak{M}$ to itself such that 
 \begin{equation*}
 \varpi_{\star} (\frac{d \psi_{s, u}  }{ds})=\Sigma^{m}_{\beta=1}u_{\beta}\frac{\partial }{\partial t_{\beta}}   ;\  \psi_{1,u}|_{M_0}=\psi_{u}. 
 \end{equation*}
\end{lem}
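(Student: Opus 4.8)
The plan is to obtain $\psi_u$ as the time-one flow on $M_0$ of a vector field on $\mathfrak{M}$ lifting a constant vector field downstairs — in other words, to run Ehresmann's fibration theorem with parameters, which is essentially Kodaira's treatment of differentiable families in \cite{Kodaira}. First I would fix any Riemannian metric $h$ on $\mathfrak{M}$, let $\mathcal V=\ker d\varpi$ be the vertical bundle and $\mathcal H$ its $h$-orthogonal complement; since $\varpi$ has full rank, $d\varpi|_{\mathcal H}$ is a fibrewise isomorphism onto $TB$. For $u=(u_1,\dots,u_m)\in B$ I would let $X_u$ be the horizontal lift of the constant vector field $\sum_{\beta=1}^m u_\beta\,\partial/\partial t_\beta$ on $B$; this is a smooth vector field on $\mathfrak{M}$, depending linearly (hence smoothly) on $u$, with $X_0\equiv 0$. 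Multiplying $X_u$ by a cutoff that is $\equiv 1$ near $M_0$ makes it complete — here using that the Fano fibers are compact, so $\varpi^{-1}$ of a compact set is compact (alternatively the bounded geometry hypothesis, if one does not want to shrink $B$) — and I would take $\psi_{s,u}$ to be its flow, a self-diffeomorphism of $\mathfrak{M}$ depending smoothly on $(s,u)$ with $\psi_{s,0}=\mathrm{id}$.

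Next I would check that $\psi_{1,u}$ does the job. Near $M_0$ the cutoff equals $1$, so $\varpi_\star\bigl(\tfrac{d\psi_{s,u}}{ds}\bigr)=\varpi_\star X_u=\sum_\beta u_\beta\,\partial/\partial t_\beta$, which is precisely the ``moreover'' identity; integrating it gives $\varpi\circ\psi_{s,u}=\varpi+su$ near $M_0$, so $\psi_{1,u}$ maps $M_0=\varpi^{-1}(0)$ diffeomorphically onto $M_u=\varpi^{-1}(u)$. Setting $\psi_u:=\psi_{1,u}|_{M_0}$ we get $\psi_0=\mathrm{id}_{M_0}$.

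For the two limits I would observe that $\psi_{u,\star}^{-1}\circ J_u\circ\psi_{u,\star}$ is exactly the pullback by $\psi_u$ of the fibrewise almost complex structure $J$, and $\psi_u^{\star}g_u$ the pullback of the fibrewise Riemannian metric determined by $G$ and $J$; both are tensors on the \emph{fixed} compact manifold $M_0$, they depend smoothly (jointly) on $u\in B$ since $J$ and $G$ are smooth on $\mathfrak{M}$ and the flow is smooth in $u$, and at $u=0$ they equal $J_0$ and $g_0$ because $\psi_0=\mathrm{id}$. A jointly smooth family of tensors on a compact manifold is, for each $k$, continuous as a map of $u$ into $C^k(M_0)$ with respect to the fixed background metric $g_0$; specializing this continuity statement at $u\to 0$ yields \eqref{Js close to J} and \eqref{gs close to g0}.

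I expect the main — in fact essentially the only — non-formal point to be the global existence of the flow $\psi_{s,u}$ on $\mathfrak{M}$ for $s\in[0,1]$ together with the requirement that each $\psi_{s,u}$ be an honest self-diffeomorphism of $\mathfrak{M}$ rather than just a diffeomorphism onto an open subset: this is exactly what compactness of the Fano fibers (or, for a non-compact total space, the bounded-geometry assumption) buys us, via completeness of $X_u$ over a neighbourhood of the central fiber. Everything else is the standard parametric Ehresmann/Kodaira trivialization plus the smooth dependence of solutions of ODEs on parameters and initial conditions.
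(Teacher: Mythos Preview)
Your argument is correct and follows essentially the same route as the paper: both lift the constant vector field $\sum_\beta u_\beta\,\partial/\partial t_\beta$ to $\mathfrak{M}$ (you via an Ehresmann connection, the paper via a partition of unity as in Kodaira's Theorem~2.4) and take its time-one flow as $\psi_u$. The only cosmetic difference is that the paper verifies \eqref{Js close to J}--\eqref{gs close to g0} by an explicit local-coordinate computation showing $\psi_u\to\mathrm{id}$ in $C^k$, whereas you package the same fact as ``a jointly smooth family of tensors on a compact manifold is $C^k$-continuous in the parameter''; these are equivalent and your formulation is arguably cleaner.
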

\begin{proof}{of Lemma \ref{From the complex setting to the real setting of J}:} 
We work in the real setting. As the terminology of \cite{Kodaira}, given any point $p\in \mathfrak{M}$, we can choose   local coordinates 
\[(x_{b}, t_{\alpha}),\ 1\leq b\leq 2n,\ 1\leq \alpha\leq m.\]

Suppose $u=(u_1,....,u_{m})\in B$. The we can lift the vector field 
\[\Sigma_{\beta=1}^{m}u_{\beta}\frac{\partial }{\partial t_{\beta}}\] up to a vector field $V_{u}$ over $\mathfrak{M}$, as in the proof of Theorem 2.4 in \cite{Kodaira}. Since $\varpi_{\star}V_{u}=\Sigma_{\beta=1}^{m}u_{\beta}\frac{\partial }{\partial t_{\beta}}$ over $B$, then $V_{u}$ generates a smooth family of diffeomorphisms $\psi_{s,u}$ from $M_{0}$ to $M_{su}$. Namely, 
\begin{equation}\label{the lifted verctor field}
V_{u}=\Sigma_{\beta=1}^{m}u_{\beta}\Sigma_{k=1}^{k_0}\rho_{k}(\frac{\partial}{\partial t_{\beta}})_{k},
\end{equation}
where $\rho_{k}$ is the partition of unity subject to a open cover $U_{k}, 1\leq k\leq k_0$.
Moreover $\psi_{u}\triangleq \psi_{1,u}$ is a diffeomorphism from $M_{0}$ to $M_{u}$.

  Next we express the complex structure $J$ of $\mathfrak{M}$ in a coordinate neighborhood $U$ as the following.
\begin{eqnarray*} 
J&=&\Sigma_{b,c=1}^{2n}J_{bc}dx_{b}\otimes \frac{\partial }{\partial x_c}+ \Sigma_{b=1}^{2n}\Sigma_{\beta=1}^{m}J_{bt_{\beta}}dx_{b}\otimes \frac{\partial }{\partial t_{\beta}}
\\&+&\Sigma_{\alpha,\beta=1}^{m}J_{t_{\alpha}t_{\beta}}dt_{\alpha}\otimes \frac{\partial }{\partial t_{\beta}}+ \Sigma_{b=1}^{2n}\Sigma_{\beta=1}^{m}J_{t_{\beta}b}dt_{\beta}\otimes \frac{\partial }{\partial x_{b}}.
\end{eqnarray*}
Since $T_{p}M_{\varpi(p)}$ is an invariant subspace of $J$ for any $p$, we have 
\[J_{bt_{\beta}}=0,\ \textrm{for any}\ \beta,\ b.\]
Then restricted over $M_0$ and $TM_{0}$, using $g_0$ as our reference metric, we consider
  \[D=\psi_{u,\star}^{-1}\circ J_{u}\circ \psi_{u,\star}-J_0.  \]
   Let the coordinates representation of $D$ be 
\begin{eqnarray*} 
D&=&\Sigma_{b,c=1}^{2n}D_{bc}dx_{b}\otimes \frac{\partial }{\partial x_c}.
\end{eqnarray*}  Let $w_{b}=x_{b}\circ\psi_{u}$. Then for any $p\in M_0$ and $1\leq b,c\leq 2n$, we have
  \begin{equation}
  D_{bc}(p)=\Sigma_{\alpha,\beta=1}^{2n}\frac{\partial w_{\alpha}  }{\partial x_b}(p)\frac{\partial x_c  }{\partial w_{\beta}} (\psi_{u}(p))J_{\alpha \beta}(\psi_{u}(p))-J_{bc}(p).
  \end{equation}
  Using (\ref{the lifted verctor field}), we deduce
  \begin{equation*}
  \lim_{u\rightarrow 0}|V_u|_{C^{k},g_0,M_0}=0.
  \end{equation*}
  Hence 
    \begin{equation}\label{limit of psiu is the identity}
  \lim_{u\rightarrow 0}|\psi_{u}-id|_{C^{k},\mathfrak{M}}=0,\ \textrm{id is the identity diffeomorphism from}\  \mathfrak{M}\ \textrm{to}\  \mathfrak{M}.
  \end{equation}
  (\ref{limit of psiu is the identity})  implies
  \begin{eqnarray}\label{Js close to J in local coordinates}& &
  \lim_{u\rightarrow 0}|D_{bc}|_{C^{k},U}\nonumber
\\&=&\lim_{u\rightarrow 0}|\Sigma_{\alpha,\beta=1}^{2n}\frac{\partial w_{\alpha}  }{\partial x_b}(p)\frac{\partial x_c  }{\partial w_{\beta}} (\psi_{u}(p))J_{\alpha \beta}(\psi_{u}(p))-J_{bc}(p)|_{C^{k},U}
\nonumber
\\&=&0.
  \end{eqnarray}
  (\ref{Js close to J in local coordinates}) implies (\ref{Js close to J}). Similarly, (\ref{gs close to g0}) is true. 

\end{proof}

\begin{lem}\label{perturbation of Kahler metric to different complex structure}Assumptions and setting as in Lemma \ref{From the complex setting to the real setting of J}. Let
 \[\widetilde{J}_{u}=\psi_{u,\star}^{-1}\circ J_{u}\circ \psi_{u,\star};\ \widetilde{\omega}_{u}=\psi_{u}^{\star}\omega_{u} \in C_{1}(M_0,J_0).\] 
 For any $\delta$, there is an $\epsilon$ such that if $|u|<\epsilon$, then 
 there exists a $2$-form $\omega_{\phi,u} \in C_{1}(M_0,J_0)$ with the following properties.
\begin{itemize}
\item $\omega_{\phi,u}(\cdot,\widetilde{J}_{u} \cdot)$ is a K\"ahler metric with respect to $\widetilde{J}_{u} $;
\item  $ |\omega_{\phi,u}-\omega_{KE}|_{C^{k-1},M_0,g_0}\leq C\delta$, $\omega_{KE}$ is the K\"ahler-Einstein metric of the central fiber (gauge fixed).
\end{itemize}

\end{lem}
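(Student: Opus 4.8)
The plan is to obtain $\omega_{\phi,u}$ by correcting $\widetilde\omega_u$ with a potential so that it becomes Kähler for $\widetilde J_u$ and simultaneously close to $\omega_{KE}$. First I would record two consequences of Lemma \ref{From the complex setting to the real setting of J}: by (\ref{Js close to J}), $\widetilde J_u\to J_0$ in $C^k(g_0)$, and by (\ref{gs close to g0}), $\widetilde\omega_u\to\omega_0$ in $C^k(g_0)$, where $\omega_0=G|_{M_0}$ is a fixed Kähler form in $C_1(M_0,J_0)$. Since $\widetilde\omega_u\in C_1(M_0,J_0)$ and $\omega_{KE}\in C_1(M_0,J_0)$ (both with respect to the fixed structure $J_0$), the $\partial_{J_0}\bar\partial_{J_0}$-lemma on the Fano manifold $(M_0,J_0)$ gives a smooth real function $f_u$ with $\omega_{KE}=\widetilde\omega_u+\sqrt{-1}\,\partial_{J_0}\bar\partial_{J_0}f_u$, normalized by $\int_{M_0}f_u\,\omega_0^n=0$; elliptic estimates for the (fixed) $J_0$-Laplacian together with $\widetilde\omega_u\to\omega_0$ and $\omega_{KE}\to\omega_0$ (after the Chen--Sun gauge fixing of $\omega_{KE}$, which is what makes $\omega_{KE}$ itself $C^k$-close to $\omega_0$) show $\|f_u\|_{C^{k+1}(g_0)}\to 0$ as $u\to 0$.

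Next I would define
\begin{equation}\label{definition of omega phi u}
\omega_{\phi,u}\;=\;\widetilde\omega_u+\sqrt{-1}\,\partial_{\widetilde J_u}\bar\partial_{\widetilde J_u} f_u,
\end{equation}
where now the Dolbeault operators are taken with respect to the nearby structure $\widetilde J_u$. This two-form is $\widetilde J_u$-invariant and $d$-closed, and because $\widetilde\omega_u\in C_1(M_0,J_0)=C_1(M_0,\widetilde J_u)$ (the Chern class is a topological invariant and $\widetilde J_u$ is homotopic to $J_0$), we get $\omega_{\phi,u}\in C_1(M_0,\widetilde J_u)$; since $\widetilde\omega_u(\cdot,\widetilde J_u\cdot)$ is already a Kähler metric (it is $\psi_u^\star\omega_u$ pulled back, hence positive for $\widetilde J_u$) and the correction term has $C^0(g_0)$-norm $\to 0$, for $|u|$ small enough $\omega_{\phi,u}(\cdot,\widetilde J_u\cdot)$ is still positive definite, i.e. Kähler for $\widetilde J_u$. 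This verifies the first bullet.

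For the second bullet, I would estimate
\[
\omega_{\phi,u}-\omega_{KE}
=\bigl(\widetilde\omega_u+\sqrt{-1}\,\partial_{\widetilde J_u}\bar\partial_{\widetilde J_u}f_u\bigr)
-\bigl(\widetilde\omega_u+\sqrt{-1}\,\partial_{J_0}\bar\partial_{J_0}f_u\bigr)
=\sqrt{-1}\bigl(\partial_{\widetilde J_u}\bar\partial_{\widetilde J_u}-\partial_{J_0}\bar\partial_{J_0}\bigr)f_u.
\]
The operator $\partial_{\widetilde J}\bar\partial_{\widetilde J}$ depends smoothly on the almost complex structure $\widetilde J$ and, written in fixed $g_0$-coordinates, its coefficients differ from those of $\partial_{J_0}\bar\partial_{J_0}$ by an amount controlled by $\|\widetilde J_u-J_0\|_{C^k(g_0)}$; hence $\|(\partial_{\widetilde J_u}\bar\partial_{\widetilde J_u}-\partial_{J_0}\bar\partial_{J_0})f_u\|_{C^{k-1}(g_0)}\le C\,\|\widetilde J_u-J_0\|_{C^{k}(g_0)}\,\|f_u\|_{C^{k+1}(g_0)}$, which tends to $0$. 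Choosing $\epsilon$ so that this quantity is $\le C\delta$ for $|u|<\epsilon$ completes the proof. The main obstacle is the bookkeeping in the last step — keeping careful track of which structure each Dolbeault operator uses and making sure the a priori bound on $f_u$ (via fixed-$J_0$ elliptic theory) is uniform in $u$ — but there is no genuine analytic difficulty beyond the $\partial\bar\partial$-lemma and standard Schauder estimates on the fixed manifold $(M_0,J_0)$.
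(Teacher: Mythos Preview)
There is a genuine gap at the very first step. You invoke the $\partial_{J_0}\bar\partial_{J_0}$-lemma to write
\[
\omega_{KE}=\widetilde\omega_u+\sqrt{-1}\,\partial_{J_0}\bar\partial_{J_0}f_u,
\]
but the $\partial\bar\partial$-lemma applies only when the exact form $\omega_{KE}-\widetilde\omega_u$ is of pure type $(1,1)$ for $J_0$. Here $\widetilde\omega_u=\psi_u^\star\omega_u$ is $(1,1)$ for $\widetilde J_u$, \emph{not} for $J_0$; taking the $(0,2)_{J_0}$-part of both sides of your identity would force $(\widetilde\omega_u)^{0,2}_{J_0}=0$, which is false in general. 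So no such $f_u$ exists. The second claim, that $\|f_u\|_{C^{k+1}}\to 0$ because ``$\omega_{KE}\to\omega_0$ after gauge fixing'', is also incorrect: $\omega_{KE}$ is a fixed metric on the central fiber and does not depend on $u$, and there is no reason it should equal (or be close to) the reference metric $\omega_0=G|_{M_0}$.

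Your idea can be repaired, and the repaired version is in fact more elementary than the paper's argument. Apply the $\partial\bar\partial$-lemma with respect to $J_0$ to the two forms that \emph{are} $(1,1)$ for $J_0$: write $\omega_{KE}=\omega_0+\sqrt{-1}\,\partial_{J_0}\bar\partial_{J_0}h$ with $h$ \emph{fixed}, independent of $u$. Then set
\[
\omega_{\phi,u}\;=\;\widetilde\omega_u+\sqrt{-1}\,\partial_{\widetilde J_u}\bar\partial_{\widetilde J_u}h,
\]
which is closed, $(1,1)$ for $\widetilde J_u$, and in $C_1(M_0,J_0)$. Now
\[
\omega_{\phi,u}-\omega_{KE}
=(\widetilde\omega_u-\omega_0)+\sqrt{-1}\bigl(\partial_{\widetilde J_u}\bar\partial_{\widetilde J_u}-\partial_{J_0}\bar\partial_{J_0}\bigr)h,
\]
and both terms are $O(\delta)$ in $C^{k-1}$: the first by (\ref{gs close to g0}), the second by (\ref{Js close to J}) together with the \emph{fixed} bound $\|h\|_{C^{k+1}}\le C$. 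Positivity of $\omega_{\phi,u}(\cdot,\widetilde J_u\cdot)$ follows since it is $C^0$-close to $\omega_{KE}(\cdot,J_0\cdot)$. By contrast, the paper attacks the type mismatch head-on: it decomposes $\omega_{KE}$ into $(p,q)$-types for $\widetilde J_u$, shows $\omega_{KE}^{2,0}\oplus\omega_{KE}^{0,2}$ is small, and then solves $\bar\partial_u\phi=\omega_{KE}^{0,2}$ for a small $(0,1)$-form $\phi$ via Hodge theory, using a compactness argument and the vanishing of harmonic $1$-forms on Fano manifolds to get the smallness of $\phi$; finally $\omega_{\phi,u}=\omega_{KE}-d(\phi+\bar\phi)$. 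The corrected potential approach avoids this Hodge-theoretic detour entirely.
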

\begin{proof}{of Lemma \ref{perturbation of Kahler metric to different complex structure}:}\ Denote $\omega_{KE}\circ \widetilde{J}_{u}$ as the tensor 
\begin{equation*}
(\omega_{KE}\circ \widetilde{J}_{u})(X,Y)=\omega_{KE}(X,\widetilde{J}_{u}(Y)).
\end{equation*}

Consider the antisymmetric part of $\omega_{KE}\circ \widetilde{J}_{u}$ as 
\[(AS \omega_{KE}\circ \widetilde{J}_{u})(X,Y)=-\omega_{KE}(X,\widetilde{J}_{u} Y)+\omega_{KE}(Y,\widetilde{J}_{u} X).\]

Since $\omega_{KE}\circ J_0$ is symmetric, we have 
\begin{equation} 
-\omega_{KE}(X,\widetilde{J}_{s} Y)+\omega_{KE}(Y,\widetilde{J}_{s} X)
  =-\omega_{KE}[X,(\widetilde{J}_{s}-J_0) Y]+\omega_{KE}(Y,(\widetilde{J}_{s}-J_0) X).
\end{equation}

Using Lemma \ref{From the complex setting to the real setting of J}, by letting $\epsilon$ to be sufficiently small  we obtain
\begin{equation*}
|\widetilde{J}_{u}-J_{0}|_{C^{k},g_0,M_0}\leq \delta,\ \textrm{for all}
\ |u|<\epsilon.
\end{equation*}
Hence
\begin{equation}  |AS \omega_{KE}\circ \widetilde{J}_{u}|_{C^{k},g_0,M_0} \leq C \delta.
\end{equation}
Then we complexify $\omega_{KE}\circ \widetilde{J}_{u}$ with respect to $\widetilde{J}_{u}$. Using the fact that $\omega_{KE}^{2,0}\oplus \omega_{KE}^{0,2}$ is precisely the antisymmetric part of $\omega_{KE}\circ \widetilde{J}_{u}$, we get 
\begin{equation}\label{2,0 part small}  |\omega_{KE}^{2,0}|_{C^{k},g_0,M_0} +|\omega_{KE}^{0,2}|_{C^{k},g_0,M_0}\leq C \delta.
\end{equation}

 Since $\omega_{KE}=\widetilde{\omega}_{u}+d\theta$, we have
 \begin{equation}\label{2,0 part small 1}
 Re \omega_{KE}^{2,0}=Re(\omega_{KE}-\widetilde{\omega}_{u})^{2,0}=Re(d\theta)^{2,0}.
 \end{equation}
 Then, 
 \begin{equation}\label{2,0 part small 2}
 \omega_{KE}^{0,2}=\bar{\partial}_{u}\theta^{0,1},\ \omega_{KE}^{2,0}=\partial_{u}\theta^{1,0}.
 \end{equation}
By  (\ref{2,0 part small}), (\ref{2,0 part small 1}), and (\ref{2,0 part small 2}), we get 
 \begin{equation}
 |\bar{\partial}_{u}\theta^{0,1}|_{C^{k},g_0,M_0}+|\partial_{u}\theta^{1,0}|_{C^{k},g_0,M_0}\leq C\delta.
 \end{equation}
 We want to find a $(0,1)-$ form $\phi$ (with respect to $\widetilde{J}_{u}$) such that 
\begin{equation}\label{Want to find a good phi} |\phi|_{C^{k},g_0,M_0}\leq C\delta,\ \textrm{and}\ \bar\partial_{u} \phi=\omega_{KE}^{0,2}.\end{equation}
 This is straight forward by considering 
 \begin{equation}
 \bar{\partial}_{u}^{\star}\theta^{0,1}=-\widetilde{\omega}_{u}^{\bar{i}j}\theta_{\bar{i},j}.
\end{equation}  
By solving the Dirichlet problem 
\begin{equation}
\Delta_{u} f= \bar{\partial}^{\star}_{u}\theta^{0,1},
\end{equation}
we get a $f$ such that 
\begin{equation}
\bar{\partial}^{\star}_{u}(\theta^{0,1}-\bar{\partial}_{u}f)=0.
\end{equation}
 Let $\phi=\theta^{0,1}-\bar{\partial}_{u}f$, then 
 \begin{equation}\label{equation of phi}
 \bar{\partial}_{u}\phi=\bar{\partial}_{u}\theta^{0,1}=\omega_{KE}^{0,2},\ \bar{\partial}^{\star}_{u}\phi=0.
 \end{equation}
 \begin{clm}$|\phi|_{C^{k},g_0,M_0}\leq C\delta. $
 \end{clm}
 The claim easily follows from the existence of $G$, which is actually a smooth family of metrics. For the sake of a self-contained note, we include the detail here. We first show $|\phi|_{0}\leq C\delta$. Were this not true, there exists  a sequence $(\phi_{i},\ u_i)$ such that 
 \begin{equation}\label{dbar dstar goes to 0 and phi is 1}
 u_i\rightarrow u_{\infty},\ \bar{\partial}_{u_i}\phi_i\rightarrow 0
 \ \textrm{in the sense of}\ (C^{k},\ {g_0}),\ \bar{\partial}^{\star}_{u_i}\phi_i=0,\ \textrm{but}\ |\phi_i|_{C^{0}}=1. 
 \end{equation}

Hence
\begin{equation}\label{Hodge laplacian of phi i go to 0}
\Delta_{H,u_{i}}\phi_i\rightarrow 0\ \textrm{in the sense of}\ (C^{k-1},\ {g_0}).
\end{equation}
The  Bochner formula for any metric and  $1-$form $\eta$ reads as :
\begin{equation}\label{Bochner formula for 1-forms}
\Delta_{g}\eta =  \Delta_{H, g}\eta-Ric_{g}\odot\eta,\ \odot \ \textrm{ is some algebraic tensor product}.
\end{equation}
By standard Schauder estimates (with respect to $g_0$), and the fact that $|Ric_{u_i}|_{C^{k},g_0,M_0}\leq C\delta $ (from  item 2 in Definition \ref{Definition of the degeneration}), we deduce from (\ref{dbar dstar goes to 0 and phi is 1}), (\ref{Hodge laplacian of phi i go to 0}), and (\ref{Bochner formula for 1-forms}) that 
\begin{equation}\label{limit is nontrivial} |\phi_i|_{C^{k},g_0,M_0}\leq C. 
\end{equation}
Then $\phi_i\rightarrow \phi_{\infty}$ in $C^{k-1}$-topology such that 
\begin{equation}\label{limit is a nontrivial harmonic form, contradiction}
|\phi_{\infty}|_{0}=1,\ |\phi|_{C^{k-1},g_0,M_0}\leq C,\ 
\Delta_{H,\omega_{u_{\infty}}}\phi_{\infty}= 0.
\end{equation}
This contradict the simply connectness of Fano-manifolds (there should be no nontrivial harmonic form). Therefore 
 \begin{equation}
 |\phi|_{0}\leq C\delta.
 \end{equation}

By the Schauder-estimate of $g_{0}$ again  and the fact $|Ric_{u}|_{C^{k},g_0,M_0}\leq C\delta $, we get the following estimate with a $C$ independent of $u$.
\begin{equation}\label{phi is small in Ck}
|\phi|_{C^{k},g_0,M_0}\leq C \delta.
\end{equation}

Then $\phi$ satisfies (\ref{Want to find a good phi}). Moreover,
we have $\partial \bar{\phi}=\omega^{2,0}_{KE}$ and $|\bar{\phi}|_{C^{k},g_0,M_0}\leq C \delta$.

Define $\omega_{\phi,u}\triangleq \omega_{KE}-d(\phi+\bar{\phi})$. Then $\omega_{\phi,u}$ is K\"ahler with respect to $\widetilde{J}_{u}$. Furthermore, (\ref{equation of phi}) and (\ref{phi is small in Ck}) imply 
\[|\omega_{\phi,u}-\omega_{KE}|_{C^{k-1},g_0,M_0}\leq C\delta.\]

The proof is complete.
\end{proof}
 \begin{proof}{of Theorem \ref{thm convergence over degenerations}:}\ Without  loss of generality we  assume $B$ is the unit ball. Fix a finite cover $(\mathfrak{U}_{j}, 1\leq j\leq C)$ of $M_0$ in $\mathfrak{M}$ with nontrivial intersection with $M_{0}$. It suffices to show there exists  a $\omega_{\phi,u}$ as in Theorem \ref{thm convergence over degenerations} for any $u\in B_{r}$, where $r$ is sufficient small such that $B(r)$ is contained in the $B$-slice for all $\mathfrak{U}_{j}$, and every $u\in B(r)$ satisfies
\begin{equation} |\psi_{u,\star}^{-1}\circ J_{u}\circ \psi_{u,\star}-J_0|_{C^{k},g_0,M_0}+|\psi_{u}^{\star}g_{u}-g_0|_{C^{k},g_0,M_0}\leq \delta,
\end{equation} 
  guaranteed by Lemma \ref{From the complex setting to the real setting of J}.
   Then $\widetilde{J}_{u}(=\psi_{u,\star}^{-1}\circ J_{u}\circ \psi_{u,\star})$  and $\widetilde{g}_{u}(=\psi_{u}^{\star}g_{u})$ satisfy the assumptions of Lemma \ref{perturbation of Kahler metric to different complex structure}. Notice that since $C_{1}(M_0, \widetilde{J}_{u})$ is an integer cohomology class, $\widetilde{J}_{u}$ converges to  $J_{0}$,
   and $\widetilde{g}_{u}$ converges to $g_{0}\in C_{1}(M_0, J_{0})$, we have
   \[C_{1}(M_0, \widetilde{J}_{u})=C_{1}(M_0, J_{0}).\]
     Hence,   Lemma \ref{perturbation of Kahler metric to different complex structure} produces a metric $\omega_{\phi,u}$ which satisfies the assumptions in Theorem 1.1 of \cite{SW}. 
  
Step 1. We briefly show the K\"ahler-Ricci flow produces a degeneration of $\widetilde{J}_{u}$  to a K\"ahler-Einstein complex structure.  Actually, Theorem 1.1 of \cite{SW} says the K\"ahler-Ricci flow initiated from $\omega_{\phi,u}$ converges to a K\"ahler-Einstein metric $(\widetilde{\omega}_{KE}, \widetilde{J}_{KE})$. In particular, there exists a smooth time-dependent family of smooth diffeomorphisms $\psi_{t}$ such that $\psi_{t}^{\star}\widetilde{J}_{u}$ converges to 
  $ \widetilde{J}_{KE}$. Thus $\widetilde{J}_{u}$ degenerates to $\widetilde{J}_{KE}$, in the sense of the setting of Theorem 1.5 in \cite{CS}. 

Step 2. In this step we  point out that  $\widetilde{J}_{u}$ degenerates to $J_0$, which is also K\"ahler-Einstein.  To be precise, suppose $\sigma_{a}M_{u}=M_{u(a)}$, then $(M_{u(a)}, J_{u(a)})$ is biholomorphic to $(M_{u}, J_{u})$ via $\sigma^{-1}_{a}$. We consider the following diffeomorphism 
\begin{equation}\label{PSI a}
\Psi_{a}=(\psi_{u}^{-1}\circ \sigma_{a}^{-1}\circ \psi_{u(a)})^{-1}:\ M_{0}\rightarrow M_{0}. 
\end{equation}   
Then it's straightforward to deduce from Lemma \ref{From the complex setting to the real setting of J} and (\ref{PSI a}) that 
\begin{eqnarray}& &\nonumber
\lim_{a\rightarrow \infty}|\Psi_{a,\star}\circ \widetilde{J}_{u} \circ \Psi^{-1}_{a,\star}-J_0|_{C^{k},g_0,M_0}\nonumber
\\&=&\lim_{a\rightarrow \infty}|\psi_{u(a),\star}^{-1}\circ J_{u(a)} \circ \psi_{u(a),\star}-J_0|_{C^{k},g_0,M_0}\nonumber
\\&=& 0,
\end{eqnarray}
which means $\widetilde{J}_{u}$ degenerates to $J_0$.
 Finally, by Theorem 1.5  in \cite{CS} on uniqueness of K\"ahler-Einstein degeneration and the uniqueness of K\"ahler-Einstein metric due to Bando and Mabuchi \cite{Bando},   we conclude    
 \begin{eqnarray*}& &
 \widetilde{\omega}_{KE}(\textrm{limit of the K\"ahler-Ricci flow})
 \\&=&
 \omega_{KE}(\textrm{the original K\"ahler-Einstein metric in the central fiber}),
 \end{eqnarray*}
 up to a biholomorphism. 
\end{proof}
 \section{Appendix: Mukai 3-fold and proof of Theorem \ref{Main Theorem}.}

It suffices to check the Mukai 3-folds form a special degeneration in the sense of Definition \ref{Definition of the degeneration}. This is already done by the work of Tian \cite{Tian97} and Donaldson \cite{Don08}. Actually the Mukai 3-folds form a  holomorphic degeneration, which is much more special than the differential family in Definition \ref{Definition of the degeneration}.  To give a self-contained proof, we very briefly introduce the necessary aspects of Mukai-Umemura 3-folds for our application, where we use the convention employed by Tian in \cite{Tian97}.  Let 
$G(4,7)$ be the Grassmannian manifold consisting of the 4-planes in $\mathbb{C}^7$. For any 3-plane $F$ in $\wedge^2 \mathbb{C}^7$, we consider the subvariety $M_{F}$ as 
\begin{equation}
M_{F}=\{p\in G(4,7)|\ F|_{p}=0\}.
\end{equation}
Consider the 3-plane $F_0=\{f_1,f_2,f_3\}$, where
\begin{eqnarray*}
& &f_1=3e_1\wedge e_6-5e_2\wedge e_5+6e_3\wedge e_4;
\\& &f_2=3e_1\wedge e_7-2e_2\wedge e_6+e_3\wedge e_5;
\\& &f_3=e_2\wedge e_7-e_3\wedge e_6+e_4\wedge e_5.
\end{eqnarray*}

The Lie algebra of holomorphic vector fields over $M_{F_0}$, denoted as $\eta(M_{F_0})$, is isomorphic to $sl(2,\mathbb{C})$. As a subalgebra of $sl(7,\mathbb{C})$( $sl(7,\mathbb{C})$ naturally acts on 3-planes in $\wedge^2 \mathbb{C}^7$ via its fundamental action), $\eta(M_{F_0})$ is given by 
\begin{itemize}
\item \begin{displaymath}
H_1=\left |
\begin{array}{ccccccc}
3   &    0           &        0    &   0   &    0    &  0 & 0\\
      0      & 2    &   0         &   0   &   0     &  0 & 0\\
     0       &       0        &  1    &   0   &   0      & 0 & 0\\
      0      &    0           &    0        & 0     &   0     &  0 &0\\
      0      &  0    &    0    &   0            & -1      & 0 & 0\\
       0     &   0   &    0    &    0  &   0     &  -2 &0\\
        0      & 0     &    0    &  0    &  0      &   0 &-3 \\
\end{array}
\right |;
\end{displaymath}

\item  \begin{displaymath}
H_2=\left |
\begin{array}{ccccccc}
0   &    1           &        0    &   0   &    0    &  0 & 0\\
      0      & 0    &   1        &   0   &   0     &  0 & 0\\
     0       &       0        &  0    &   1  &   0      & 0 & 0\\
      0      &    0           &    0        & 0     &   1     &  0 &0\\
      0      &  0    &    0    &   0            & 0     & 1 & 0\\
       0     &   0   &    0    &    0  &   0     &  0 &1\\
        0      & 0     &    0    &  0    &  0      &   0 &0 \\
\end{array}
\right |;
\end{displaymath}
\item  \begin{displaymath}
H_3=\left |
\begin{array}{ccccccc}
0   &    0          &        0    &   0   &    0    &  0 & 0\\
      3      & 0    &   0        &   0   &   0     &  0 & 0\\
     0       &       5       &  0    &   0  &   0      & 0 & 0\\
      0      &    0           &    6        & 0     &   0     &  0 &0\\
      0      &  0    &    0    &   6            & 0     & 0 & 0\\
       0     &   0   &    0    &    0  &   5    &  0 &0\\
        0      & 0     &    0    &  0    &  0      &   3 &0 \\
\end{array}
\right |.
\end{displaymath}
\end{itemize}
Now we consider the deformation $F_{v}$ of $F_0$ as $F_{v}=\{f_1^{v},f^{v}_2,f^{v}_3\}$, such that 
\[f_i^{v}=f_i+\Sigma_{j+k\geq 7+i}v_{ijk}e_j\wedge e_k,\ i=1,2,3.\]
In \cite{Tian97}, Tian showed that when $v\neq 0$ and $|v|$ is small , the deformed Fano 3-fold $M_{F_{v}}$ does not admit any K\"ahler-Einstein metric. On the other hand, Donaldson showed  $M_{F_0}$ admits a K\"ahler-Einstein metric.  Moreover, Donaldson showed there is at least a 2-parameter family of small deformations $M_{\alpha,C}$ of $M_{F_0}$ such that $M_{\alpha,C}$ also admit K\"ahler-Einstein metric for all $\alpha,\ C$ small. Thus  $M_{F_0}$ is not isolated K\"ahler-Einstein. Nevertheless, $M_{F_0}$ is a K\"ahler-Einstein degeneration of $M_{F_{v}}$.  To be precise, we consider  
     the one real-parameter group generated by $H_1$. 
    \begin{displaymath}
\psi(s)=\left |
\begin{array}{ccccccc}
s^{-3}   &    0           &        0    &   0   &    0    &  0 & 0\\
      0      &s^{-2}     &   0         &   0   &   0     &  0 & 0\\
     0       &       0        &  s^{-1}     &   0   &   0      & 0 & 0\\
      0      &    0           &    0        & 1    &   0     &  0 &0\\
      0      &  0    &    0    &   0            & s     & 0 & 0\\
       0     &   0   &    0    &    0  &   0     &  s^{2} &0\\
        0      & 0     &    0    &  0    &  0      &   0 &s^{3}  \\
\end{array}
\right |.
\end{displaymath}
    It's easy to check that 
    \begin{equation}
    |s\psi(s)f_1^{v}-f_1|+|\psi(s)f_2^{v}-f_2|+|s^{-1}\psi(s)f_3^{v}-f_3|
    \leq C|s|.
    \end{equation}
    Hence \begin{equation}\label{degeneration of Tian's manifold to MU-3 fold }\lim_{s\rightarrow 0}\psi(s)M_{F_{v}}=M_{F_0}. 
    \end{equation}
    Thus $\Cup_{s\in [-1,1]} \psi(s)M_{F_{v}}$ is a smooth submanifold of $G(4,7)\times \mathbb{R}$.
    Moreover, since $C_{1}(M_{F_{v}})=C_{1}(Q)|_{M_{F_{v}}}$ ($Q$ is the universal quotient bundle of $G(4,7)$), we can take a closed positive $(1,1)$-form $\Omega\in C_{1}(Q)$ over $G(4,7)$. Then the restricted form $\Omega|_{\mathfrak{M}}$ plays the role of the $G$ in item 2 of Definition \ref{Definition of the degeneration}.

     Yuanqi Wang, Department of Mathematics, University of California at
Santa Barbara, Santa Barbara, CA, USA; wangyuanqi@math.ucsb.edu.

\end{document}